\documentclass{birkmult}
%\usepackage{refcheck}
%
%
% THEOREM Environments (Examples)-----------------------------------------
%
% \newtheorem{thm}{Theorem}[section]
% \newtheorem{cor}[thm]{Corollary}
% \newtheorem{lem}[thm]{Lemma}
% \newtheorem{prop}[thm]{Proposition}
% \theoremstyle{definition}
% \newtheorem{defn}[thm]{Definition}
% \theoremstyle{remark}
% \newtheorem{rem}[thm]{Remark}
% \newtheorem*{ex}{Example}
% \numberwithin{equation}{section}

 \newtheorem{theorem}{Theorem}[section]
 \newtheorem{corollary}[theorem]{Corollary}
 
 \newtheorem{proposition}[theorem]{Proposition}
 \theoremstyle{definition}
 \newtheorem{definition}[theorem]{Definition}
 \theoremstyle{remark}
 \newtheorem{remark}[theorem]{Remark}
 \newtheorem{example}{Example}
 \numberwithin{equation}{section}

\DeclareMathOperator{\Res}{\mathcal{R}}

\def\R#1{\mathbb{R}^{#1}}
\def\Com#1{\mathbb{C}^{#1}}

\def\I{\mathrm{i}}

%%%%%%%%%%%%%%%%%%%%%%%%%%%%%%%%%%%

\def\R{{\mathbb R}}

%%%%%%%%%%%%%%%%%%%%%%%%%%%%%%%%%%%

%%%%%%%%%%%%%%%%%%%%%%%%%%%%%%%%%%%

\begin{document}

\title{On the Jacobian of the harmonic moment map}

\author{Bj\"orn Gustafsson}

\address{Department of Mathematics\\
Royal Institute of Technology\\
S-100 44 Stockholm\\
Sweden}

\email{gbjorn@kth.se}

\author{Vladimir Tkachev}

\address{Department of Mathematics\\
Royal Institute of Technology\\
S-100 44 Stockholm\\
Sweden}

\email{tkatchev@kth.se}

\subjclass{13B35; 30E05; 47A57}

\keywords{Elimination function, exponential transform, formal power series, harmonic moment, Jacobian, resultant, transfinite function}

\begin{abstract}
In this paper we represent harmonic moments in the language of transfinite
functions, that is projective limits of polynomials in infinitely
many variables. We obtain also an explicit formula for the Jacobian of a generalized harmonic moment map.
\end{abstract}

\thanks{The authors have been supported by supported by the Swedish Research Council,
grant KAW 2005.0098 from the Knut and Alice Wallenberg Foundation,
and by the European Science Foundation Networking Programme HCAA}

\maketitle

%\tableofcontents

%%%%%%%%%%%%%%%%%%%%%%%%%%%%%%%%%%%%%%%%%%%%%%%%%%%%%%%%%%%%%%%%
%%%%%%%%%%%%%%%%%%%%%%%%%%%%%%%%%%%%%%%%%%%%%%%%%%%%%%%%%%%%%%%%%%

\section{Introduction}

With any integer $k$ and any closed oriented analytic curve $\Gamma$
one can associate the $k$th harmonic moment of $\Gamma$, defined as
$$
M_k(\Gamma)=\frac{1}{2\pi \I}\int_{\Gamma}z^k\bar z\; dz.
$$
When $k$ is nonnegative and $\Gamma=\partial \Omega$ for some domain
$\Omega$, the moment takes the more familiar form
$$
M_k(\Gamma)=\frac{1}{\pi}\iint_{\Omega}z^k dxdy, \qquad z=x+\I y.
$$
Information about $\Gamma$ can be read off from these harmonic
moments, which have turned out to be  useful geometric objects
in many problems of complex analysis and potential theory.

The harmonic moments appear also in algebraic contexts, in the first
place in connection with conservation laws discovered in 40's by
P.~Polubarinova-Kochina \cite{PK} and L.~Galin \cite{Gal}, and were
also studied in 70's by S.~Richardson \cite{R72} in application to
the Hele-Shaw problem (see \cite{GVas} for a full account of
relevant material). It turns out that the quantities $M_k(\Gamma)$
constitute a hierarchy of conservative quantities for the boundary
$\Gamma$ under the action of Hele-Shaw flow with a source at the
origin (Laplacian growth). This allows to describe Hele-Shaw
evolution explicitly for a wide class of polynomial domains. This
complete integrability has recently been subject to new
investigations by I.~Krichever, A.~Marshakov, M.~Mineev-Weinstein,
P.~Wiegmann and A.~Zabrodin, e.g. \cite{WZ}, \cite{KKMWZ},
\cite{KMWZ}. One point of view is that harmonic moments can be
thought of as canonical coordinates in certain well established
integrable hierarchies (for example, in the dispersionless 2D Toda
hierarchy), and it has for example been shown that they can be
written as derivatives of an associated tau-function.

Despite the above mentioned applications, some principal
difficulties remain on the level of mathematical rigor with the
basic definitions of harmonic moments regarded as functionals in
infinitely many variables. In particular, there is no established
algebraic or analytic calculus which allows importing harmonic
moments as well-defined functionals. In this paper we make an
attempt to represent harmonic moments in the language of transfinite
functions, that is projective limits of polynomials in infinitely
many variables. In this picture, not only the usual complex
variables ($z$ etc.), but also the coefficients ($a_0, a_1,\dots$)
of analytic functions are treated as variables. Some analogies with
symmetric functions or germs of analytic functions may be traced.
For example, the number of variables is irrelevant in symmetric
functions and any symmetric function is uniquely determined if the
number of variables is large enough. A similar rigidity is valid for
harmonic moments in the sense that they stabilize after truncation
of variables of higher grade.

We mention that although the model of transfinite functions
discussed below allows manipulation of objects in an algebraic
manner, it does not allow a priori to `evaluate' the objects. In
some particular cases, for example for domains which are conformal
images of univalent polynomials, the transfinite calculus becomes
finite, which makes direct evaluations possible. In more involved
cases one needs an adequate homomorphism into one of standard
evaluation rings. However, we will not pursue this matter in the
present article.

Another, more concrete, application of the above formalism is an
explicit formula for the Jacobian of a generalized harmonic moment map.

%%%%%%%%%%%%%%%%%%%%%%%%%%%%%%%%%%%%%%%%%%%%%%%%%%%%%%%%%%%%%%%%%%%%%
%%%%%%%%%%%%%%%%%%%%%%%%%%%%%%%%%%%%%%%%%%%%%%%%%%%%%%%%%%%%%%%%%%%%%%%

\section{Transfinite functions}

\subsection{Projective limits of polynomial rings}

Let $R$ be a commutative ring with unit and $A$ a set of independent
commutating\footnote{In the paper we deal only with commutating
variables while all the constructions below are still valid for
non-commutative setup without any changes.} variables. Then the
polynomial ring $R[A]$ makes sense, even if the set $A$ is infinite
(of any cardinality). We shall introduce transfinite functions over
$A$ as formal sums (in general infinite) of monomials in the
variables $A$ with coefficients in $R$ such that for each finite
subset $F\subset A$ there are only finitely many terms which contain
variables only from $F$. In other words, by setting every variable
in $A\setminus F$ equal to zero a transfinite function reduces to a
polynomial in $R[F]$.

For the formal definition the notion of projective limit is
appropriate. Let $\mathcal{F}$ denote the family of finite subsets
of $A$. This is a directed set in a natural way: it is partially
ordered by inclusion ($F_1\subset F_2$, $F_1,F_2\in\mathcal{F}$) and
with this partial order any two elements have an upper bound, namely
their union (if $F_1,F_2\in\mathcal{F}$ then $F=F_1\cup
F_2\in\mathcal{F}$ and $F_1\subset F$, $F_2\subset F$). These
inclusions induce projection homomorphisms
\begin{equation}\label{eq:projection}
\pi_{F_1, F_2}: R[A]/(A\setminus F_2)\to  R[A]/(A\setminus F_1),
\end{equation}
where, for any subset  $S\subset R[A]$, $(S)$ denotes the ideal in
$R[A]$ generated by $S$.

The maps (\ref{eq:projection}) define a projective system of rings
based on the directed set $\mathcal{F}$. We define transfinite
functions by passing to the projective (or inverse) limit.

\begin{definition}
The ring of \textit{transfinite} functions over $A$  and with coefficients in $R$ is the
projective limit
$$
R_\infty [A] =\lim_{\leftarrow{F}} R[A]/(A\setminus F).
$$
\end{definition}

Recall that, as a set, the projective limit $R_\infty[A]$ can be
taken to be that subset of the cartesian product
$\Pi_{F\in\mathcal{F}} R[A]/(A\setminus F)$ for which the $F_1$'s
and $F_2$'s components are related by the map $\pi_{F_2, F_1}$
whenever $F_1\subset F_2$. It is actually enough to consider only
cofinal segments in the above cartesian product, because for any
$F\in\mathcal{F}$, the $F$'s component of an element determines
uniquely the $F_1$'s, for any $F_1\subset F$. In some operations
with the projective limit the so arising possibility of
self-correction of initial segments is important. One may think of
$R_\infty[A]$ as a kind of completion of $R[A]$.

The above definition of transfinite functions is modeled on standard
definitions of $p$-adic numbers and formal power series. For
example, the ring of formal power series is
$\mathbb{C}[[z]]=\lim_{\leftarrow n}\mathbb{C}[z]/(z^n)$. However,
the transfinite functions are actually somewhat simpler, because for
any $F\in \mathcal{F}$ there is a natural embedding $R[F]\to R[A]$
which becomes an isomorphism
$$
R[F]\cong R[A]/(A\setminus F).
$$
The projection maps (\ref{eq:projection}) therefore have the
alternative description as maps
\begin{equation}\label{eq:projection1}
\pi_{F_1,F_2}: R[F_2]\to R[F_1],
\end{equation}
most easily described by declaring that all variables in
$F_2\setminus F_1$ shall be set equal to zero.

The natural projection maps
$$
\pi_F: R_\infty [A]\to R[A]/(A\setminus F).
$$
correspond in this simpler picture to the previously mentioned maps
$\pi_F: R_\infty [A]\to R[F]$ (setting all variables not in $F$
equal to zero). In the other direction we have inclusion maps
$R[F]\to R_\infty [A]$, which postcomposed with the $\pi_F$ give the
identity maps on the $R[F]$.

The definition with projection maps (\ref{eq:projection}) is
somewhat more flexible than (\ref{eq:projection1}) in that it allows
for including formal power series (with coefficients in $R$) into
the picture, namely by admitting powers $a^n$ ($a\in A$) among the
generators of the ideals. However, we shall not go into such
generalizations in the present paper, and therefore we shall work in
the simpler setting (\ref{eq:projection1}).

%%%%%%%%%%%%%%%%%%%%%%%%%%%%%%%%%%%%%%%%%%%%%%%%%%%%%%%%%%%%%%%%%%

\subsection{Transfinite functions}

The typical way we shall use the above general definitions is as
follows. We are interested in forming transfinite functions in the
variables $z$, $a_0, a_1, a_2\dots$, plus sometimes corresponding
conjugated variables ($\bar{a}_1$ etc.) and variables with negative
index ($a_{-1}$ etc.), all treated as independent variables. We also
want to be able to invert a few of the variables, namely $z$, $a_0$,
i.e. admit $z^{-1}$ and $a_0^{-1}$. Hence these cannot be set equal
to zero. We can achieve the above goals by choosing the ground ring
$R$ to contain the variables we want to invert, namely we take
$$
R=\mathbb{C}[z,z^{-1},a_0, a_0^{-1}] :=\mathbb{C}[z,w,a_0,
b_0]/(zw-1, a_0 b_0 -1).
$$
Then our ring of transfinite functions will be
$$
\mathbb{C}_\infty [z,z^{-1},a_0, a_0^{-1};a_1, a_2, \dots] :=
R_\infty [a_1, a_2, \dots].
$$

Set
$$
A_0 =\{z,z^{-1},a_0, a_0^{-1}\}
$$
for the ``\textit{null-variables}", or invertible variables, and
$$
A_n =\{a_1, a_2,\dots, a_n\}
$$
for the remaining variables up to some index $n\geq 1$. This set
could, depending on the context, also contain the corresponding
conjugated variables (considered as independent variables), or
variables with negative index, for example $ A_n =\{a_1, \bar{a}_1,
a_2, \bar{a}_2 ,\dots, a_n, \bar{a}_n,\}$ or $ A_n =\{a_{-n},\dots, a_{-2},
a_{-1}, a_1, a_2,\dots, a_n\}$. Finally, set
$$
A_+ =\bigcup_{n\geq 1} A_n, \quad A= A_0\cup A_+.
$$

The ring of transfinite functions will in this context be denoted
$$
\mathbb{C}_\infty[A]=\mathbb{C}_\infty [z,z^{-1},a_0, a_0^{-1};a_1,
a_2,a_3, \dots]=R_\infty[A_+].
$$
In the construction of $\mathbb{C}_\infty[A]$ as a projective limit
it is enough to use the sets $A_n$ (in place of all finite subsets
of $A$). Thus, setting
$$
\mathbb{C}_n[A] =\mathbb{C} [z,z^{-1},a_0, a_0^{-1};a_1, a_2, \dots,
a_n] = R [A_n],
$$
we have the projective system $\{\mathbb{C}_n[A]\}_{n\geq 0}$, with
projection homomorphisms
$$
\pi_{ij}: \mathbb{C}_{j}[A]\to \mathbb{C}_{i}[A], \qquad j\geq i\geq
0
$$
defined by setting the variables in $A_j\setminus A_i$ equal to zero
(equivalently, by removing every term which contains such a
variable). The projective limit is
$$
\mathbb{C}_\infty[A]=\lim_{\leftarrow n}\mathbb{C}_n[A]
$$
with projections
$$
\pi_n: \mathbb{C}_\infty[A]\to\mathbb{C}_n[A],
$$
obtained by setting the variables in $A_+\setminus A_n$ equal to
zero.

In case conjugate variables are included there is a natural
involution
$$
{}^* : \mathbb{C}_\infty[A]\to \mathbb{C}_\infty[A]
$$
which for any variable which has a conjugate exchanges the two
(e.g., $a_1\mapsto \bar{a}_1$, $\bar{a}_1\mapsto a_1$). Variables
which do not have a conjugate should be thought of as real
variables, and remain unchanged under the involution.

One way to describe the ring of transfinite functions is to consider
all sequences $(h_n)_{n\geq 0}$ of elements in $\mathbb{C}_{n}[A]$
stable under high-order substitutions
\begin{equation}\label{restr}
\pi_{ij}(h_j)=h_{i}, \qquad j\geq i\geq 0,
\end{equation}
and equipped with pointwise algebraic operations. In what follows we
make no distinction between such sequences and their limits in
$\mathbb{C}_{\infty}[A]$ and write $h=\lim\limits_{\leftarrow}h_n$.
The element $h_n$ is called the $n$th approximant of $h$. The
described (projective) convergence is rigid in the sense that any
approximant $h_n$ is determined uniquely by $h$.

In the other direction we have the injection $\mathbb{C}_n[A]\to
\mathbb{C}_\infty[A]$, by which any element $f\in \mathbb{C}_{n}[A]$
gives rise to a transfinite function by taking the inverse limit,
$\lim\limits_{\leftarrow}f_k$, of the sequence
\begin{equation*}\label{complet}
f_k:= \left\{
  \begin{array}{ll}
   \pi_{kn}( f), & \hbox{\text{for
$0\le k\le n$;}} \\
    f , & \hbox{\text{for $k\geq n$}}
  \end{array}
\right.
\end{equation*}
This $\lim\limits_{\leftarrow}f_k$ is the unique element in
$\mathbb{C}_\infty[A]$ with the property that its $n$th approximant
is exactly $f$. In what follows we identify `finite' polynomials
from $\mathbb{C}_n[A]$ and their lifting in $\mathbb{C}_\infty[A]$
(that is consider the $\mathbb{C}_n[A]$ as subrings in
$\mathbb{C}_\infty[A]$).

It is natural consider a transfinite function  $h$ as a function in
an infinite number  of variables. Although we think of all the
variables $A$ as `complex' variables and `variable coefficients', no
value (`number') can be assigned to  $h$ at any concrete point.
However, one is allowed to substitute finitely many variables by
complex numbers (this is a special case of operation 2) in the next
subsection), and the result will be another transfinite function.

In order to pass completely from transfinite objects to classical
ones, however, one need to have an evaluation on
$\mathbb{C}_\infty[A]$, like the limit in topological categories.
Algebraically this is equivalent to constructing an adequate
homomorphism from the ring $\mathbb{C}_{\infty}[A]$ to some standard
evaluation  ring. A choice of such a homomorphism should be made
individually in each concrete case.

\begin{example}\label{ex:1}
The simplest example of a transfinite function is the
\textit{transfinite power series} in $z$, with variable coefficients
$a_k$:
\begin{equation}\label{pol}
\lim_\leftarrow\sum_{k=0}^n a_kz^{k+1}=\sum_{k=0}^\infty a_kz^{k+1}.
\end{equation}
This is an element in $\mathbb{C}_\infty[z,a_0,a_1,\dots]$. On the
other hand, a power series with constant coefficients, like
$\sum_{k=0}^\infty k! z^{k}$, is not a transfinite function in our
sense, but is an element of $\mathbb{C}[[z]]=\lim_{\leftarrow
n}\mathbb{C}[z]/(z^n)$.
\end{example}

%%%%%%%%%%%%%%%%%%%%%%%%%%%%%%%%%%%%%%%%%%%%%%%%%%%%%%%%%%%%%%%%%%%%%%%%%%%%%

\subsection{Operations with transfinite functions}
\label{sec:index}

{\bf 1)} ({\it General maps.}) Consider a map $q:\mathbb{C}[A]\to
\mathbb{C}[A]$ which commutes with restrictions for large enough
indices:
\begin{equation*}\label{gradingmap}
q\circ \pi_{ij}=\pi_{ij}\circ q, \qquad j\ge i\geq m,
\end{equation*}
where $m\geq 0$ depends only on $q$. Then $q$ extends naturally to a
map on $\mathbb{C}_{\infty}[A]$ as follows. Let $x\in \mathbb{C}_{\infty}[A]$
and $x_n\in \mathbb{C}_{n}[A]$ be  the sequence of its approximants.
Define $y_i=q(x_i)$ for $i\geq m$ and $y_i=\pi_{im}(y_m)$ for $i\leq m$. Then
$(y_n)_{n\geq 0}$ satisfies (\ref{restr}) because for any $j\geq i\geq m$
$$
\pi_{ij}(y_j)=\pi_{ij}(q(x_j))=q(\pi_{ij}(x_j))=q(x_i)=y_i
$$
and similarly one checks (\ref{restr}) for small indices.
Therefore $(y_n)_{n\geq 0}$ induces en element in $\mathbb{C}_{\infty}[A]$
denoted by $q(x)$.

{\bf 2)} ({\it Substitution.}) Let $A$, $B$ be two families of
variables, filtered by $A_n$, $B_n$ ($n\geq 0$) respectively,
let $\phi\in \mathbb{C}_{\infty}[B]$, let $X\subset B$ be a finite
subset and let $t:X\to \mathbb{C}_{\infty}[A]$ be any map. Then for
any $i\geq 0$ we define
$$
\psi_i=\phi_i|_{(t,X)}\in \mathbb{C}_{i}[A,B\setminus X],
$$
where $|_{(t,X)}$ means that one makes the substitutions
$x=(t(x))_n=\pi_n \circ t(x)\in \mathbb{C}_n[A]$ for each $x\in X$.
This new sequence obviously satisfies (\ref{restr}), hence induces
an element in $\mathbb{C}_{\infty}[A,B\setminus X]$, which we denote
by $\phi|_{(t,X)}$.

\begin{remark}
The introduced composition of two transfinite functions is very
close to what is known as the plethysm in category of symmetric
functions (see, e.g., \cite[p.~135]{MacDon}.
\end{remark}

{\bf 3)} ({\it Derivation.}) Another example is the partial
derivative. For any variable $x\in A_m$
$$
\partial_{x}\circ \pi_{ij}=\pi_{ij}\circ \partial_{x}, \qquad i\geq m.
$$
Therefore $\partial_x$ extends to $\partial_{x}:
\mathbb{C}_{\infty}[A]\to \mathbb{C}_{\infty}[A]$. One then checks
easily that $\partial_x$ is a derivation on
$\mathbb{C}_{\infty}[A]$, that is $\partial_{x}$ is linear and
satisfies the Leibniz rule. Moreover, the derivatives satisfy the
usual commutativity: for any $x,y$
$$
\partial_x\partial_y=\partial_y\partial_x.
$$

{\bf 4)} ({\it Coefficient extraction.}) An important operation
is coefficient extraction. Let $y\in \mathbb{C}_{\infty}[A]$ and
$x\in A$. Then for any fixed $n$ the coefficient
$[x^n](y_i)$ of $x^n$ in $y_i\in \mathbb{C}_{i}[A]$ is well defined
and extends in an obvious way to an element $[x^n](y)\in
\mathbb{C}_{\infty}[A]$.

%%%%%%%%%%%%%%%%%%%%%%%%%%%%%%%%%%%%%%%%%%%%%%%%%%%%%%%%%%%%%%%%%%%

\subsection{The transfinite resultant}

Recall that the resultant  of two  polynomials
$f(z)=a_n\prod_{i=1}^n(z-\xi_i)=\sum_{i=0}^n a_iz^i$
and
$g(z)=b_m\prod_{j=1}^m(z-\eta_j) =\sum_{j=0}^m b_jz^j$
is a polynomial function in the coefficients of $f$ and $g$
having the elimination property that it vanishes if and only if $f$ and $g$ have
a common zero \cite{Waerden}, \cite{Gelfand-Kapranov-Zelevinskij}. In terms of the zeros of the polynomials
the resultant is given by the Poisson product formula
\begin{equation*}\label{res1}
\begin{split}
\Res_{\mathrm{pol}}(f,g)&=a_n^mb_m^n\prod_{i,j} (\xi_i-\eta_j) =a_n^m\prod_{i=1}^n g(\xi_i).
\end{split}
\end{equation*}
Alternatively, the resultant can be computed as the determinant of the Sylvester matrix of size $n+m$
\begin{equation*}\label{sfor}% {Sylmat}
\Res_{\mathrm{pol}}(f,g)=\det
\begin{pmatrix}
a_0   & a_1  &  \ldots & a_n \\
     & \ldots &\ldots &  \ldots & \ldots \\
    &  &a_0  & a_1 & \ldots & a_n   \\
   b_0  & b_{1} & \ldots & b_m \\
% & \beta_n & \beta_{n-1} & \ldots& \beta_0\\
      & \ldots &\ldots &  \ldots & \ldots \\
     &  &  b_0 & b_{1} &\ldots & b_m  \\
\end{pmatrix},
\end{equation*}
where the first $m$ rows are the shifted coefficients of $f$, the next
$n$ rows are the shifted coefficients of $g$.

The authors introduced recently \cite{GT07} a notion of the meromorphic resultant of two meromorphic functions on an arbitrary compact Riemann surface. For any two meromorphic functions $f$ and $g$, whose divisors and have no common points, the number
\begin{equation*}
\label{gagb1new}
\Res(f,g)=g((f))\equiv \prod_{i} g(\xi_i)^{N_i},
\end{equation*}
is called the meromorphic resultant of $f$ and $g$. Here $(f)=\sum_{i}N_i\cdot \xi_i$ is the divisor of $f$. For the general properties of the meromorphic resultant, see \cite{GT07}. We mention only that the meromorphic resultant is symmetric and homogeneous of degree zero (i.e. depends only on the divisors of $f$ and $g$). Moreover, for two rational functions $f(z)=\sum_{k=0}^n b_kz^{-k}$  and
$ g(z)=\sum_{k=0}^m a_kz^{k}$, one easily finds that their meromorphic resultant is related to the classical polynomial resultant by the formula:
\begin{equation}\label{formula}
\begin{split}
\Res(\sum_{k=0}^n b_kz^{-k}&,\sum_{k=0}^m a_kz^{k})=\frac{1}{a_0^nb_0^m}\Res_{\mathrm{pol}}(\sum_{k=0}^n b_kz^{n-k},\sum_{k=0}^m a_kz^{k})\\
&=\frac{1}{a_0^nb_0^m}\det
\begin{pmatrix}
b_0   & b_1  &  \ldots & b_n \\
     & \ldots &\ldots &  \ldots & \ldots \\
    &  &b_0  & b_1 & \ldots & b_n   \\
   a_n  & a_{n-1} & \ldots & a_0 \\
% & \beta_n & \beta_{n-1} & \ldots& \beta_0\\
      & \ldots &\ldots &  \ldots & \ldots \\
     &  &  a_n & a_{n-1} &\ldots & a_0  \\
\end{pmatrix}.
\end{split}
\end{equation}

What we understand by the transfinite resultant is actually the inverse limit of the latter meromorphic resultant. Indeed, one can check that for $n\geq 1$ the  Sylvester's determinants
\begin{equation*}
\begin{split}
\Res(a,b)_n&=a_0^{-n}b_0^{-n}
\det
\begin{pmatrix}
b_0   & b_1  &  \ldots & b_n \\
     & \ldots &\ldots &  \ldots & \ldots \\
    &  &b_0  & b_1 & \ldots & b_n   \\
   a_n  & a_{n-1} & \ldots & a_0 \\
% & \beta_n & \beta_{n-1} & \ldots& \beta_0\\
      & \ldots &\ldots &  \ldots & \ldots \\
     &  &  a_n & a_{n-1} &\ldots & a_0  \\
\end{pmatrix}\\
&\equiv \Res(\sum_{k=0}^n b_kz^{-k},\sum_{k=0}^n a_kz^k)=\Res(\sum_{k=0}^n a_kz^k,\sum_{k=0}^n b_kz^{-k})
\end{split}
\end{equation*}
satisfy the transfinite property (\ref{restr}): substitution $a_n=b_n=0$ into the above determinant
gives $\Res(a,b)_{n-1}$ (recall also that the meromorphic resultant is a symmetric function of its arguments). This shows that the sequence $(\Res(a,b)_n)_{n\geq 0}$ generates an element in $\mathbb{C}_{\infty}[a_0,a_0^{-1},b_0,b_0^{-1};a_1,b_1,\dots]$ denoted by $\Res(a,b)$ and called the \textit{transfinite resultant}.
Its two initial approximants are:
\begin{equation*}
\begin{split}
\Res(a,b)_1&=1-\frac{a_1{b}_1}{a_0b_0},\\
\Res(a,b)_2&=1-\frac{a_1{b}_1+2a_2{b}_2}{a_0b_0}-
\frac{a_0a_2 {b}^2_1+b_0a_1^2{b}_2-a_2^2{b}_2^2+a_1a_2{b}_1{b}_2}{a_0^2b_0^2}.
\end{split}
\end{equation*}

\subsection{The transfinite elimination function}
In many applications the so-called elimination function is more advantageous than the meromorphic resultant. It is defined by
$$
\mathcal{E}_{f,g}(u,v):=\Res(f-u,g-v),
$$
where $u$ and $v$ are two independent complex variables. It is  known (see \cite{GT07}) that for any two meromorphic functions $f$ and $g$ on a closed Riemann surface this function is rational and satisfies the following elimination property:
$$
\mathcal{E}_{f,g}(f(\zeta),g(\zeta))\equiv 0.
$$
The transfinite version of the elimination function is defined by
\begin{equation*}\label{Syl}
\begin{split}
\mathcal{E}_{a,b}(u,v)&=\lim_{\leftarrow n}\Res(-u+\sum_{k=0}^n a_kz^{k+1},-v+\sum_{k=0}^n b_kz^{-k-1}).
\end{split}
\end{equation*}
The latter transfinite function can be also viewed as a transfinite resultant with two distinguished null-variables $u$ and $v$.
The $n$th approximant is the determinant of size $(2n+2)\times (2n+2)$:
\begin{equation}\label{Syl1}
\begin{split}
\mathcal{E}_{a,b}(u,v)_n
&=\frac{1}{(uv)^{-n-1}}
\det
\begin{pmatrix}
v   & -b_0  &  \ldots & -b_n \\
     & \ldots &\ldots &  \ldots & \ldots \\
    &  &v  & -b_0 & \ldots & -b_n   \\
   -a_n  & -a_{n-1} & \ldots & u \\
% & \beta_n & \beta_{n-1} & \ldots& \beta_0\\
      & \ldots &\ldots &  \ldots & \ldots \\
     &  &  -a_n & -a_{n-1} &\ldots & u  \\
\end{pmatrix}.
\end{split}
\end{equation}
We shall see (see Remark~\ref{bbelow} below) that for $b=\bar a$, the transfinite elimination function is related to the exponential transform. In general the following analogue of elimination property holds.

\begin{proposition}\label{pro:res}
Let $f=\sum_{k=0}^\infty a_kz^{k+1}$ and $g=\sum_{k=0}^\infty b_kz^{-k-1}$. Then
\begin{equation*}
\label{consis}
\mathcal{E}_{a,b}(f,g)=0,
\end{equation*}
in the sense that
\begin{equation}
\label{consis1}
\mathcal{E}_{a,b}(f_n,g_n)_n=0, \quad n\geq 1,
\end{equation}
where $f_n$ and $g_n$ are the $n$th approximants of $f$ and $g$ respectively.

\end{proposition}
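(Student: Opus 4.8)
The plan is to reduce the transfinite statement to a finite one about the classical (polynomial) resultant via the formula \eqref{formula}, and then invoke the elimination property of the meromorphic resultant that is recalled just above the proposition. Fix $n\geq 1$. By definition, $\mathcal{E}_{a,b}(f_n,g_n)_n$ is obtained from the $n$th approximant \eqref{Syl1} by substituting $u=f_n=\sum_{k=0}^n a_k z^{k+1}$ and $v=g_n=\sum_{k=0}^n b_k z^{-k-1}$ for the null-variables $u,v$ (in the sense of operation 2), i.e.\ substitution). Tracking through the identification of $\mathcal{E}_{a,b}(u,v)_n$ with $\Res(-u+\sum_{k=0}^n a_kz^{k+1},\,-v+\sum_{k=0}^n b_kz^{-k-1})$, this substitution produces exactly $\Res\bigl(f_n(z)-f_n(\zeta),\,g_n(z)-g_n(\zeta)\bigr)$, where now $z$ is the variable of the rational functions and $\zeta$ plays the role of the parameter that the substituted coefficients depend on. Equivalently, writing $F(z)=f_n(z)$ and $G(z)=g_n(z)$ as rational functions of $z$ on $\mathbb{P}^1$, I must show $\mathcal{E}_{F,G}(F(\zeta),G(\zeta))=0$, and this is precisely the elimination property $\mathcal{E}_{f,g}(f(\zeta),g(\zeta))\equiv 0$ for meromorphic functions on a closed Riemann surface, applied to the sphere.

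First I would make the substitution bookkeeping rigorous: show that the $n$th approximant of $\mathcal{E}_{a,b}(u,v)$, with $u,v$ replaced by the \emph{polynomials} $f_n$ and $g_n$ (which lie in the relevant finite ring $\mathbb{C}_n$), coincides with the Sylvester determinant \eqref{Syl1} evaluated at those polynomials, and that no ``self-correction'' from higher approximants interferes — this is automatic because both $f_n$ and $g_n$ only involve the variables $a_0,\dots,a_n,b_0,\dots,b_n$ already present in $\mathbb{C}_n[A]$, so the substitution commutes with $\pi_n$. Next I would identify the resulting expression with the meromorphic resultant $\Res\bigl(f_n-u,\,g_n-v\bigr)\big|_{u=f_n(\zeta),v=g_n(\zeta)}$ on $\mathbb{P}^1$ using \eqref{formula} (after clearing the powers of $z$ to pass between the rational functions $\sum b_k z^{-k-1}$ and honest polynomials). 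Then the Poisson product form of the meromorphic resultant gives $\Res(f_n-u,g_n-v)=\prod_i\bigl(g_n(\xi_i(u))-v\bigr)$ over the zeros $\xi_i(u)$ of $f_n-u$; setting $u=f_n(\zeta)$ forces $\zeta$ to be one of these zeros, so the factor with $\xi_i=\zeta$ becomes $g_n(\zeta)-g_n(\zeta)=0$, and the whole product vanishes. The denominator $a_0^{-n-1}$-type normalization in \eqref{formula} is a unit in $R$, so it does not affect vanishing.

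The main obstacle I anticipate is the careful treatment of \emph{which} variable is being eliminated and which is a parameter: in \eqref{Syl1} the object is a transfinite function in the coefficient-variables $a_k,b_k$ and the two null-variables $u,v$, with no ``$z$'' visible, yet the intended meaning of ``$\mathcal{E}_{a,b}(f_n,g_n)$'' reintroduces $z$ through $f_n=\sum a_k z^{k+1}$, so one must be scrupulous that the $z$ appearing inside $f_n$ after substitution is the \emph{same} running variable as the $\zeta$ over which the resultant eliminates — indeed it must be a \emph{new} independent variable $\zeta$, and the claim is an identity in $\mathbb{C}_\infty[z,z^{-1},a_0,a_0^{-1};a_1,a_2,\dots]$ after renaming. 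Once this is set up correctly, the vanishing is just the classical fact that $\Res_{\mathrm{pol}}$ detects common zeros, specialized to the tautological common zero $z=\zeta$; no genuinely new estimate is needed beyond the already-cited properties of the meromorphic resultant and the relation \eqref{formula}.
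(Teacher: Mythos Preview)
Your proposal is correct and follows essentially the same route as the paper: both reduce \eqref{consis1} to the vanishing of the Sylvester determinant \eqref{Syl1} after the substitution $u=f_n(\zeta)$, $v=g_n(\zeta)$, and both conclude by observing that $\zeta$ itself is then a tautological common zero of $f_n(\cdot)-u$ and $g_n(\cdot)-v$. The only cosmetic difference is that the paper phrases this via the common-root criterion for the polynomial resultant $\Res_{\mathrm{pol}}(\widetilde g,\widetilde f)$ directly, whereas you route through the Poisson product for the meromorphic resultant; the content is the same.
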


\begin{proof}
It suffices to prove (\ref{consis1}). To this aim notice that the resultant of the two polynomials
$$
\widetilde{g}(z)=(g_n(z)-v)z^{n+1}=b_n+b_{n-1}z+\ldots+b_0z^n-vz^{n+1},
$$
and
$$
\widetilde{f}(z)=f_n(z)-u=-u+a_0z+a_{1}z^2+\ldots+a_nz^{n+1}
$$
with respect to the variable $z$ coincides with  the determinant in (\ref{Syl1}):
$$
\mathcal{E}_{a,b}(u,v)_n=\frac{1}{(uv)^{n+1}}\Res_{\mathrm{pol}}(\widetilde{g},\widetilde{f}).
$$
By the characteristic property of the resultant,  $\mathcal{E}_{a,b}(u,v)_n$ vanishes if and only if $\widetilde{g}$ and $\widetilde{f}$ have a common root $z_0\in\Com{}$, i.e. $\widetilde{f}(z_0)=\widetilde{g}(z_0)=0$ for some complex $z_0$. This is equivalent to saying that
$g_n(z_0)-v=0$ and $f_n(z_0)-u=0$. Hence $\mathcal{E}_{a,b}(f_n,g_n)_n$ equals identically zero which yields the desired property.

\end{proof}

%%%%%%%%%%%%%%%%%%%%%%%%%%%%%%%%%%%%%%%%%%%%%%%%%%%%%%%%%%%%%%%%%%%%%%%%%%%
%%%%%%%%%%%%%%%%%%%%%%%%%%%%%%%%%%%%%%%%%%%%%%%%%%%%%%%%%%%%%%%%%%%%%%%%%%%

\section{Transfinite functions on closed analytic curves}

A general idea of a transfinite function on closed analytic curves is modeled on the following observation. Consider any parameterized curve $\Gamma=f_n(\mathbb{T})$, where $f_n$ is the $n$th approximant to the transfinite series (\ref{pol})
\begin{equation}\label{resulting}
f_n(z)=\sum_{k=0}^na_{k}z^{k+1}, \qquad a_0>0,
\end{equation}
and $\mathbb{T}$ is the unit circle.
Regarding the coefficients of $f_n(z)$ as a coordinate system on the space of parameterized curves $\Gamma$, many established functionals can be written as  functions of the coefficients $a_k$. Those functionals which are polynomials in $a_k$ for  $k\geq 1$ will be called \textit{admissible}.

Let $h: \Gamma\to \Com{}$ be any admissible functional and $h_n$ its resulting expression when $\Gamma$ has the form (\ref{resulting}). If the sequence $(h_n)$ satisfies the condition (\ref{restr}), it extends to a transfinite function $\widetilde{h}\in \Com{}_\infty[a_0,a_1,\overline{a}_1,\ldots]$ which is called the transfinite extension of $h$. In that case $\widetilde{h}$ can be thought of as the `value' of $h$ on the transfinite series
$
f(z)=\sum_{k=0}^{\infty}a_kz^{k+1}
$
which, in its turn, can be regarded as an `ideal'  curve.  Hence the formalism of transfinite functions can be applied to translate admissible functionals onto algebraic language. Below we demonstrate how this principle works with the Schwarz function as an example.

%%%%%%%%%%%%%%%%%%%%%%%%%%%%%%%%%%%%%%%%%%%%%%%%%%%%%%%%%%%

\subsection{The Schwarz function and harmonic moments}\label{sec:Sch}

We start with standard definitions. With any analytic
curve $\Gamma$ (not necessarily closed in general) one can
associate the Schwarz function, that is a holomorphic in a
neighborhood of  $\Gamma$  function  $S(\zeta)$ such that
\begin{equation}\label{elim}
S(\zeta)=\bar \zeta, \quad \zeta\in \Gamma.
\end{equation}
The above characteristic property is important when manipulating with the anti-holomorphic coordinate $\bar \zeta$
by substituting a \textit{holomorphic} function $S(\zeta)$. The domain of definition of the Schwarz
function is usually not a priori given, but one may always choose it
to be symmetric with respect to $\Gamma$. Alternatively, one may
think of $S(\zeta)$ only as a germ of an analytic function given on
the curve.

In the other extreme, there is one case with a maximally unsymmetric
domain of definition of the Schwarz function which singles out as
being particularly tractable and having a rich theory: this is when
$\Gamma=\partial\Omega$ for some domain $\Omega$ and the Schwarz
function extends to being a meromorphic function in all of $\Omega$.
Then $\Omega$ is called quadrature domain, or algebraic domain
\cite{Aharonov-Shapiro76}. It turns out that this requirement is
rigid enough (see \cite{Aharonov-Shapiro76}, \cite{Gustafsson83}) to
ensure that $S(\zeta)$ even is an algebraic function. Moreover, a
simply connected quadrature domain is an image of the unit disk
under a rational univalent function \cite{Aharonov-Shapiro76}.

Now assume that $\Gamma$ is a boundary of a simply connected domain
$\Omega$ containing the origin and denote its Schwarz function by $S(\zeta)$. The
following integrals make sense:
\begin{equation}\label{Sch20}
M_k(\Gamma)=\frac{1}{2\pi \I}\int_{\Gamma}\bar {\zeta} \zeta^{k} \;d\zeta,
\qquad k\in \mathbb{Z},
\end{equation}
which for nonnegative $k$ may be rewritten as
\begin{equation}\label{MGO}
M_k(\Gamma)=\frac{\I}{2\pi}\iint_{\Omega}\zeta^{k}\;
d\zeta\wedge d\bar \zeta, \qquad k\geq0.
\end{equation}
These quantities are known as harmonic or complex moments of the
domain $\Omega$. For negative $k$, (\ref{MGO}) still make sense as
principal value integrals. Alternatively, (\ref{Sch20}) may be turned
into area integrals by passing to the complement
$\Com{}\setminus\overline{\Omega}$.

In what follows we shall think of the harmonic moments as
generalized Laurent coefficients. More precisely, note that the
substitution of the definition of $S(\zeta)$ into (\ref{Sch20})
yields
\begin{equation}\label{Sch21}
M_k(\Gamma)=\frac{1}{2\pi \I}\int_{\Gamma}S(\zeta) \zeta^{k} \;d\zeta,
\qquad k\in \mathbb{Z},
\end{equation}
hence, one can think of $M_{-k}(\Gamma)$ as the  Laurent coefficients (with respect to $\Gamma$) of the shifted Schwarz function $\zeta S(\zeta)$. We shall write this as
\begin{equation}\label{precise}
S(\zeta)\sim \sum_{k\in \mathbb{Z}}M_k(\Gamma)\zeta^{-k-1}.
\end{equation}

It worth to notice that (\ref{Sch20}), as well as (\ref{Sch21}),
allows to define the moments for any parameterized curve. In a more
generality, this reduces to considering the moments of a function
rather than of a curve. Indeed, we recall that by the Riemann
mapping theorem simply connected domains (with the origin inside)
are in one-to-one correspondence with holomorphic and univalent in
the unit disk $\mathbb{D}$ functions $f(z)$ normalized by $f(0)=0$
and $f'(0)\in \mathbb{R}^+$.

Assume that the boundary of $\Omega$ is an analytic curve.
Then a uniformizing function $f$ may be chosen to be holomorphic
in the closed unit disk. Hence substituting of $\zeta=f(z)$ into
(\ref{Sch20}) gives
\begin{equation}\label{Sch22}
\mu_k(f):=M_k(f(\mathbb{T}))=\frac{1}{2\pi \I}\int_{\Gamma}
f^*(z) f^k(z) f'(z) dz, \qquad k\in \mathbb{Z},
\end{equation}
where $\mathbb{T}=\partial \mathbb{D}$ and a holomorphic in
$\Com{}\setminus\mathbb{D}$ function $f^*$ is defined by
$$
f^*(z)=\overline{f(1/\bar{z})}.
$$
It is natural to refer to $\mu_k(f)$ as the moments of the function
$f$.

\begin{proposition}\label{pr00}
Let $n\geq 1$ and let $f_n(z)$ be polynomial (\ref{resulting}) such that $f_n(z)z^{-1}$ has no zeros in the closed
unit disk. Then the $\mu_k(f_n)$ are rational functions of the
coefficients of $f_n$. Moreover
\begin{equation*}\label{anan}
a_0^{n-2k-1}\mu_{k}(f_n)\in \mathbb{C}[a_0,a_1,\overline{a}_1,\ldots,a_n,
\overline{a}_n], \qquad \forall k\in \mathbb{Z}
\end{equation*}
and
\begin{equation}\label{ii}
\mu_k(f_n)=0, \qquad \forall k\geq  n+1
\end{equation}
In particular, $\mu_k(f_n)$ are admissible.
\end{proposition}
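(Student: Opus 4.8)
The plan is to turn the contour integral (\ref{Sch22}) into a residue at the origin. Since $f_n(z)=z\,p(z)$ with $p(z)=a_0+a_1z+\dots+a_nz^n$, and $f_n^*(z)=\sum_{j=0}^n\bar a_jz^{-j-1}=z^{-n-1}\tilde p(z)$ with $\tilde p(z)=\sum_{i=0}^n\bar a_{n-i}z^i$, the integrand becomes
\begin{equation*}
f_n^*(z)\,f_n(z)^k\,f_n'(z)=z^{k-n-1}\,\tilde p(z)\,p(z)^k\,\bigl(p(z)+zp'(z)\bigr),
\end{equation*}
a meromorphic function of $z$. The hypothesis that $f_n(z)z^{-1}=p(z)$ has no zeros in $\overline{\mathbb{D}}$ forces $p(0)=a_0\neq 0$ and, when $k<0$, puts every pole of $p(z)^k$ outside $\overline{\mathbb{D}}$; hence the only singularity of the integrand inside $\mathbb{D}$ is at $z=0$, and the residue theorem gives $\mu_k(f_n)=\res_{z=0}\bigl[z^{k-n-1}\tilde p\,p^k(p+zp')\bigr]$. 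For $k\ge 0$ this is the coefficient of $z^{n-k}$ in the genuine polynomial $\tilde p(z)p(z)^k(p(z)+zp'(z))$; for $k=-m<0$ it is the coefficient of $z^{n+m}$ in the rational function $\tilde p(z)(p(z)+zp'(z))/p(z)^m$, whose power-series expansion at $z=0$ has coefficients that are polynomials in $a_0,\bar a_0,\dots,a_n,\bar a_n$ divided only by powers of $a_0$. This already yields the rationality assertion, and the vanishing (\ref{ii}): when $k\ge n+1$ the coefficient of $z^{n-k}$ in a polynomial in $z$ is a coefficient of a negative power, hence $0$.

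It remains to pin down the power of $a_0$ in the denominator, and here I would estimate the $a_0$-adic valuation $v_{a_0}$ of the relevant coefficient in each of the two ranges. For $k\ge 0$, the clean move is the identity $p^k(p+zp')=p^{k+1}+\frac{z}{k+1}(p^{k+1})'$, which shows $\mu_k(f_n)=\sum_{i+\ell=n-k}\bar a_{n-i}\,\frac{k+1+\ell}{k+1}\,[z^\ell]p(z)^{k+1}$. Every monomial of $[z^\ell]p(z)^{k+1}$ is a product of $k+1$ of the $a_j$ in which the indices sum to $\ell$, so at least $\max(0,k+1-\ell)$ of the factors must be $a_0$; since $\ell\le n-k$ this gives $v_{a_0}(\mu_k(f_n))\ge\max(0,2k+1-n)=\max(0,-(n-2k-1))$. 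For $k=-m<0$, I would expand $p(z)^{-m}=a_0^{-m}\bigl(1+\sum_{j\ge1}(a_j/a_0)z^j\bigr)^{-m}$, so that $v_{a_0}\bigl([z^t]p(z)^{-m}\bigr)\ge-(m+t)$; because the constant term of $p(z)+zp'(z)$ is exactly $f_n'(0)=a_0$, multiplying by $p(z)+zp'(z)$ costs at most $m+s-1$ powers of $a_0$ in the coefficient of $z^s$, and then multiplying by $\tilde p(z)$ and reading off the coefficient of $z^{n+m}$ (so $s\le n+m$) gives $v_{a_0}(\mu_{-m}(f_n))\ge-(2m+n-1)=-(n-2k-1)$.

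In both ranges the only denominators are powers of $a_0$, and the exponent is bounded by $n-2k-1$; since $a_0$ is a null-variable (invertible) and $\bar a_0=a_0$, multiplying through by $a_0^{n-2k-1}$ lands us in $\mathbb{C}[a_0,a_1,\bar a_1,\dots,a_n,\bar a_n]$, which is exactly the displayed assertion, and admissibility follows at once. A final check, which I would carry out via the residue formula by setting $a_n=\bar a_n=0$ (so that $f_n,f_n^*,f_n'$ collapse to $f_{n-1},f_{n-1}^*,f_{n-1}'$), confirms that the sequence $(\mu_k(f_n))_n$ obeys (\ref{restr}) and therefore genuinely assembles into a transfinite function.

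I expect the only real obstacle to be the two valuation estimates above: one must be slightly careful that the worst-case cancellations do not eat more powers of $a_0$ than $n-2k-1$ --- in particular, for $k<0$ the bound would be off by one without using that $f_n'(0)=a_0$. Everything else --- the reduction to a residue, the vanishing for $k>n$, and the identification of all denominators with powers of $a_0$ --- is routine bookkeeping.
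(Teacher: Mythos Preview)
Your argument is correct and follows essentially the same route as the paper: both reduce $\mu_k(f_n)$ to a constant-term (residue) extraction and then control the $a_0$-denominators via the binomial/Laurent expansion of $f_n^k$ (equivalently $p^k$), while the vanishing for $k\geq n+1$ comes from the obvious degree count. The only difference is cosmetic---you factor $f_n=zp$ and track the $a_0$-adic valuation explicitly (including the ``off-by-one'' saving from $f_n'(0)=a_0$), whereas the paper writes $\mu_k(f_n)=\mathrm{CT}_z(zf_n^*f_n'\cdot f_n^k)$, records that $zf_n'f_n^*$ has only powers $z^m$ with $-n\le m\le n$, gives the expansion of $f_n^k$, and leaves the bookkeeping to the reader.
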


\begin{proof}
One can rewrite  (\ref{Sch22}) as follows
\begin{equation*}\label{extra}
\mu_k(f_n)=\mathrm{CT}_z(zf_n^*\,  f'_n\cdot f_n^k),
\end{equation*}
where $\mathrm{CT}_z$ denotes constant term extraction (with respect
to $z$). Then proposition follows from the fact that $zf_n'f_n^*$
contains only the terms $z^{m}$ with $-n\leq m\leq n$ and from the
Laurent expansion
\begin{equation*}\label{simple}
\begin{split}
f_n^k(z)&=a_0^{k}z^{k}(1+\frac{1}{a_0}\sum_{i=1}^na_iz^i)^k\\
&=a_0^{k}z^{k}\sum_{j=0}^\infty \frac{k(k-1)\ldots(k-j+1)}{j!}
\biggl(\sum_{i=1}^n\frac{a_iz^i}{a_0}\biggr)^j.\\
\end{split}
\end{equation*}
\end{proof}
It is a remarkable property of the harmonic moments that they
can be extended as transfinite elements in the sense described in the beginning of this section. Indeed, a simple analysis of the above series for $f_n^k$ shows that
\begin{equation}\label{imme}
\mu_k(f_{n})|_{a_{n}=\overline{a}_{n}=0}=\mu_k(f_{n-1}),
\end{equation}
hence the following projective limit exists:
$$
\mu_k(f)=\lim_{\leftarrow n}\mu_k(f_n)\in \Com{}_\infty[a_0,a_1,\overline{a}_1,\ldots]
$$
and will be called the $k$th transfinite harmonic moment. The zero
moment $\mu_{0}(f)$ is found to be
\begin{equation*}\label{M0}
\mu_0=a_0^2+2a_1\bar{a}_{1}+3a_2\bar{a}_{2}+\ldots,
\end{equation*}
and equals the normalized area of the domain $f_n(\mathbb{D})$ when
$f_n$ is a univalent function. Clearly $\mu_0$ is positive
and real (i.e., $\mu_0^*=\mu_0$).

The formulas for higher moments, especially for the negative ones,
are much more involved. The first
approximants for $k=1$ and $k=-1$ are
\begin{equation*}\label{m1}
\begin{split}
\mu_{1}(f_1)&=a_0^2 \bar{a}_{1},\\
\mu_{1}(f_2)&=a_0^2 \bar{a}_{1}+3a_0a_{1}\bar{a}_{2},\\
\mu_{1}(f_3)&=a_0^2 \bar{a}_{1}+3a_0a_{1}\bar{a}_{2}+4a_0a_{2}
\bar{a}_{3}+2a_1^2\bar{a}_{3},\\
\end{split}
\end{equation*}
and
\begin{equation*}\label{m2}
\begin{split}
\mu_{-1}(f_1)&=a_1-\frac{a_1^2 \bar{a}_{1}}{a_0^2},\\
\mu_{-1}(f_2)&=a_1+\frac{2a_2\bar{a}_{1}}{a_0}-\frac{a_1^2
\bar{a}_{1}+3a_{2}^2\bar{a}_{2}}{a_0^2}+\frac{a_1^3\bar{a}_{2}}{a_0^4}.\\
\end{split}
\end{equation*}

In general, for non-negative values of $k$ Richardson's formula
\cite{R72}
\begin{equation*}
\mu_k(f_n)=\sum (s_0+1) a_{s_0}\cdots a_{s_{k}}\bar{ a}_{s_0+\ldots
+s_{k}+k}, \label{summa}
\end{equation*}
holds, where the summation is  over all multiindices
$(s_0,\ldots,s_k)$, $0\leq s_j\leq n$.

\subsection{Harmonic moments via resultant}\label{below}
Here we  describe briefly another way  to obtain the harmonic moments with non-negative indices. Given a bounded domain $\Omega$, the function of two complex variables defined by
\begin{equation*}
\exp[\frac{1}{2\pi\I}\int_\Omega
\frac{d\zeta}{\zeta -z}\wedge \frac{d\bar{\zeta}}{\bar{\zeta} -\bar {w}}]=:E_\Omega (z,w) :
\;(\Com{}\setminus \overline{\Omega})^2\to \Com{}
\end{equation*}
is called the exponential transform of the domain $\Omega$ (see, e.g., \cite{Carey-Pincus74}, \cite{P98},
\cite{GP98}). Expanding the integral
in power series in  $1/\bar w$ gives
\begin{equation}\label{Cauchy0}
E_\Omega(z,w)=1-\frac{1}{\bar w}C_\Omega(z)+ \mathcal{O}(\frac{1}{|w|^2})
\end{equation}
as $|w|\to \infty$, with $z\in\mathbb{C}\setminus \overline{\Omega}$ fixed. Here
\begin{equation}\label{when}
C_\Omega(z)= \frac{1}{2\pi \I}\int\limits_{\Omega} \frac{d\zeta\wedge d\bar\zeta }{z-\zeta}=\sum_{k\geq 0}\frac{M_k(\Gamma)}{z^{k+1}}, \quad \text{as $z\to \infty$},
\end{equation}
is the Cauchy transform of $\Omega$, and $M_k(\Gamma)$ are defined as in (\ref{MGO}).

When $\Omega=f_n(\mathbb{D})$, where $f_n$ is a univalent in the closed unit disk polynomial (\ref{resulting}), the sum in (\ref{when}) contains only terms with degrees $k\leq n$ and (\ref{Cauchy0}) becomes
$$
E_{f_n(\mathbb{D})}(z,w)=1-\frac{1}{\bar w}\sum_{k=0}^n\frac{\mu_k(f_n)}{z^{k+1}}+ \mathcal{O}(\frac{1}{|w|^2}).
$$
On the other hand, the authors showed in \cite{GT07} that the exponential transform of such $f_n(\mathbb{D})$ is the meromorphic resultant:
\begin{equation}\label{Exp1}
E_{f_n(\mathbb{D})}(z,w)=\Res_\zeta(-z+\sum_{k=0}^n a_k\zeta^{k+1},-\bar w+\sum_{k=0}^n \bar a_k\zeta^{-k-1})\\
\end{equation}
Combing these formulas we obtain
$$
\det
\begin{pmatrix}
  1      &       &       &  -\frac{\bar a_n}{\bar w}  &        &\\
  -\frac{a_0}{z}   &\ddots &       & \vdots             & \ddots       &\\
  \vdots &       &  1    &  -\frac{\bar a_0}{\bar w}  &  & -\frac{\bar a_n}{\bar w}\\
  -\frac{a_n}{z}   &       & -\frac{a_0}{z}  & 1                 &        &\vdots    \\
         & \ddots& \vdots&                    &\ddots        & -\frac{\bar a_0}{\bar w}   \\
         &       & -\frac{a_n}{z}  &                    &  & 1\\
\end{pmatrix}
=1-\frac{1}{\bar w}\sum_{k=0}^n\frac{\mu_k(f_n)}{z^{k+1}}+ \mathcal{O}(\frac{1}{|w|^2}).
$$
Hence the above determinant completely determines all the harmonic moments $\mu_k(f_n)$ for $0\leq k\leq n$ and expanding the determinant in $\bar w$, one gets explicit formulas.

\begin{remark}\label{bbelow}
Another corollary of (\ref{Exp1}) is that $E_{f_n(\mathbb{D})}(z,w)$ coincides with the $n$th approximant of the transfinite elimination function (\ref{Syl1}) for  $a=(a_{k})_{k\geq 0}$ and $b=(\bar a_k)_{k\geq 0}$. This can be thought as a transfinite analogue of the coincidence of the elimination function and the exponential transform on the level of transfinite functions.
\end{remark}

\subsection{The transfinite Schwarz function}
By property (\ref{ii}) in Proposition~\ref{pr00} the series
$$
S(f_n,\zeta)=\sum_{k\in \mathbb{Z}}\mu_k(f_n)\zeta^{-k-1}
$$
contains only finite number of negative terms, hence may be
interpreted as formal Laurent series with coefficients in
$\mathbb{C}[a_0,a_1,\overline{a}_1,\ldots,a_n,\overline{a}_n]$. It
follows immediately from (\ref{imme}) that $S(f_n,\zeta)$ also
satisfies the transfinity condition. Hence
$$
S_f(\zeta)=\lim_{\leftarrow n}\sum_{k\in \mathbb{Z}}\mu_k(f_n)\zeta^{-k-1}
$$
is well-defined on the level of formal Laurent series and in this setting  formula (\ref{precise}) makes a rigorous sense.
Moreover, the characteristic property (\ref{elim}) of the Schwarz function
reads in the new notations as follows:
\begin{equation*}\label{11}
S_f(f)=f^*.
\end{equation*}

%%%%%%%%%%%%%%%%%%%%%%%%%%%%%%%%%%%%%%%%%%%%%%%%%%%%%%%%%%%%%%%%%%%%%%%
%%%%%%%%%%%%%%%%%%%%%%%%%%%%%%%%%%%%%%%%%%%%%%%%%%%%%%%%%%%%%%%%%%%%%%%%

\section{A generalized moment map}

Let a closed Jordan analytic curve be given. The most natural choice
of a coordinate system for an analytic curve is the coefficients of
the uniformization map, which maps the unit disk onto the interior
of the curve.

Another  choice comes from the Schwarz function $S(\zeta)$ of the curve which by
(\ref{elim}) contains complete information about the curve. Therefore
the harmonic moments may be thought as coordinates.
However one can extract information about the curve from harmonic
moments in many different ways. We shall consider the following two.
Let $f_n$ is given by (\ref{resulting}). Taking into account that
$\mu_0$ and $a_0$ are real (in fact, positive), we define the so-called \textit{complete moment map} by
\begin{equation}\label{mm1}
\mu(\bar a_{n},\ldots,\bar a_{1},a_0,\ldots,a_n)=
({\mu_{-n}},\ldots,\mu_0,\ldots,\mu_n):\; \R{}\times
\mathbb{C}^{2n}\to \R{}\times \mathbb{C}^{2n},
\end{equation}
where $\mu_k=\mu_k(f_n)$ (recall that $\mu_k(f_n)=0$ for $k\geq n+1$).
It is well-known that the right half of the latter map (the moments with non-negative indices)
determines $f_n$ at least locally and usually one takes it as an alternative (to the coefficients $a_k$)
set of coordinates (see, e.g., \cite{KKMWZ}).

Another moment map, which was treated recently in
\cite{KT}, \cite{T2005}, is a moment map consisting of the
nonnegative moments and their conjugates:
\begin{equation}\label{mm2}
\mu^*(\bar a_{n},\ldots,\bar a_{1},a_0,\ldots,a_n)=
({\bar\mu_{n}},\ldots,\bar \mu_{1},\mu_0,\ldots,\mu_n):\;
\R{}\times \mathbb{C}^{2n}\to \R{}\times \mathbb{C}^{2n}.
\end{equation}

These two maps may be written in a common form
\begin{equation}\label{muph}
\phi(\bar a_{n},\ldots,\bar a_{1},a_0,\ldots,a_n)
=(\phi_{-n},\ldots,\phi_{-1},\phi_0,\ldots,\phi_n),
\end{equation}
with the generalized moments $\phi_k$ given by the integrals
\begin{equation}\label{phii}
\phi_k=\frac{1}{2\pi \I}\int_{\mathbb{T}}\Phi_k(f_n,f_n^*)f_n'dz
=\mathrm{CT}_{z}(zf_n'\Phi_k(f_n,{f_n}^*)),
\end{equation}
where $\Phi_k(\zeta,\bar \zeta)$ are suitable functions. If
$f:\mathbb{D}\to \Omega$ is a uniformizing map of a simply
connected domain $\Omega$ then these moments are
$$
\phi_k=\frac{1}{2\pi\I }\iint_{\Omega}\frac{\partial \Phi_k}{\partial \overline{\zeta}}\;d\zeta\wedge d \overline{\zeta}=
\frac{1}{2\pi\I }\int_{\Gamma}\Phi_k\;d\zeta.
$$
For the complete moment map (\ref{mm1})
\begin{equation*}\label{form1}
\Phi_k(\zeta,\bar \zeta)\equiv \zeta^k\bar \zeta,\quad (k\in \mathbb{Z})
\end{equation*}
and it is not hard to check that for (\ref{mm2})
\begin{equation*}\label{form2}
\Phi_k(\zeta,\bar \zeta)=\left\{
\begin{array}{ll}
\zeta^k\bar \zeta & \text{if $k\geq0$,}\\
\frac{1}{1-k}{\bar\zeta}^{1-k}& \text{if $k\leq -1$,}\\
\end{array}
\right.
\end{equation*}

Our main result below shows that the Jacobian of the generalized
moment map $\phi$ always splits into two distinguished factors: the
first depends on a concrete form of the functions $\Phi_k$ and the
second is the self-resultant of the derivative $f'$. To formulate
it, it is convenient to set
$$
a_{-n}=\bar{a}_n.
$$

\begin{theorem}
\label{th:main} Let $n\geq 1$ and $\Phi_k(\zeta,\bar \zeta)$,
$-n\leq k\le n$, be a system of rational functions. For $f_n(z)=z
\sum_{k=0}^na_kz^k$ introduce the residue matrix
\begin{equation}\label{givven}
v_{kj}=\frac{1}{2\pi \I}\int_{\mathbb{T}}\Phi_k(f_n,{f_n}^*)\frac{dz}{z^{1+j}}.
\end{equation}
Then the Jacobian of the generalized moment map (\ref{muph}) is
\begin{equation*}\label{rm1}
\begin{split}
\frac{\partial \phi}{\partial a}&\equiv
\frac{\partial (\phi_{-n},\ldots,\phi_0,\ldots,\phi_n)}
{\partial (a_{-n},\ldots,a_0,\ldots,a_n)}=2a_0^{2n+1}
\det (v_{kj})\cdot \Res(f_n',f_n'^*).\\
\end{split}
\end{equation*}
Here and in what follows
we denote by $(r_{kj})$ the $(2n+1)\times (2n+1)$-matrix
with entries $r_{kj}$ with indices $k,j$ running interval between $-n$ and $n$.

\end{theorem}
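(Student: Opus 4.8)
The plan is to compute the Jacobian by expressing each generalized moment $\phi_k$ as a constant-term (equivalently, contour-integral) functional of $f_n$ and $f_n^*$, differentiating under the integral sign, and then recognizing the resulting $(2n+1)\times(2n+1)$ matrix as a product of three matrices whose determinants give the three advertised factors. Write $\phi_k=\mathrm{CT}_z\bigl(zf_n'\,\Phi_k(f_n,f_n^*)\bigr)$ as in \eqref{phii}. The variables are $a_{-n}=\bar a_n,\dots,\bar a_1,a_0,a_1,\dots,a_n$; note $z f_n'(z)=\sum_{k=0}^n (k+1)a_k z^{k+1}$ depends only on $a_0,\dots,a_n$, while $f_n^*(z)=\overline{f_n(1/\bar z)}=\sum_{k=0}^n \bar a_k z^{-k-1}$ depends only on $\bar a_0,\dots,\bar a_n$. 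So $\partial f_n/\partial a_j = z^{j+1}$ for $j\geq 0$, $\partial f_n/\partial a_j=0$ for $j<0$; and $\partial f_n^*/\partial a_{-j}=z^{-j-1}$ for $j\geq 1$ (using $a_{-j}=\bar a_j$), $\partial f_n^*/\partial a_j=0$ for $j\geq 1$, while both $f_n$ and $f_n^*$ depend on $a_0$ (which is real) via $\partial f_n/\partial a_0=z$, $\partial f_n^*/\partial a_0=z^{-1}$.

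The key step is the chain rule. Differentiating $\phi_k$ with respect to $a_j$ gives
\begin{equation*}
\frac{\partial \phi_k}{\partial a_j}=\mathrm{CT}_z\!\left(z f_n'\Bigl(\partial_1\Phi_k\cdot \tfrac{\partial f_n}{\partial a_j}+\partial_2\Phi_k\cdot \tfrac{\partial f_n^*}{\partial a_j}\Bigr)+\tfrac{\partial (zf_n')}{\partial a_j}\,\Phi_k\right),
\end{equation*}
where $\partial_1,\partial_2$ denote derivatives of $\Phi_k(\zeta,\bar\zeta)$ in its first and second slots. The last term, $\mathrm{CT}_z\bigl((j+1)z^{j+1}\Phi_k(f_n,f_n^*)\bigr)=(j+1)v_{k,j}$ for $j\geq 0$ (and $0$ for $j<0$) by the definition \eqref{givven} of $v_{kj}$; this already produces the factor $\det(v_{kj})$ up to the diagonal scaling $\mathrm{diag}(1,2,\dots,n+1,0,\dots,0)$ on the $a_0,\dots,a_n$ columns, which I will have to handle carefully. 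The remaining two terms, involving $\partial_1\Phi_k$ and $\partial_2\Phi_k$ paired against $z^{j+1}$ or $z^{-j-1}$, are where the derivative $f_n'$ and its adjoint $f_n'^*$ must materialize; here I expect to integrate by parts on $\mathbb{T}$ (move a $d/dz$ off $\Phi_k$ onto the monomial) so that the $\partial_1\Phi_k$, $\partial_2\Phi_k$ terms reorganize into $\mathrm{CT}$ of $f_n'$ (resp.\ $f_n'^*$) times $\Phi_k$ times a shifted monomial, and then combine back with the $v_{k,j}$ term. The upshot should be a factorization $\bigl(\partial\phi_k/\partial a_j\bigr)=V\cdot D\cdot C$ (in some order) of $(2n+1)\times(2n+1)$ matrices, where $V=(v_{kj})$, $D$ is the diagonal factor contributing $2a_0^{2n+1}$ after accounting for $a_0$ being a single real variable feeding both $f_n$ and $f_n^*$, and $C$ is a matrix built from the Laurent coefficients of $f_n'$ and $f_n'^*$ whose determinant is, by formula \eqref{formula}, exactly $\Res(f_n',f_n'^*)$ — note $f_n'(z)=\sum_{k=0}^n(k+1)a_kz^k$ has degree $\leq n$ and $f_n'^*(z)=\sum_{k=0}^n(k+1)\bar a_k z^{-k}$, so their meromorphic resultant is the normalized Sylvester determinant in \eqref{formula}.

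The main obstacle will be bookkeeping around the special column $j=0$: because $a_0$ is a single real coordinate on which both $f_n$ and $f_n^*$ depend, the $a_0$-derivative is a sum of two terms, and this is precisely what is expected to generate the extra factor $2$ (versus $2n+1$ independent complex shifts one would naively count) and the power $a_0^{2n+1}$ rather than $a_0^{2n}$. Getting the Sylvester matrix $C$ to come out in exactly the normalized form of \eqref{formula} — including the $1/(a_0^{n}\cdot(\text{leading})^{n})$ prefactors and the transposition/shear operations that don't change the determinant — is the delicate part; I would verify it against the low-order approximants ($n=1$, and the explicit $\mu_{\pm1}(f_2)$ listed before the theorem) to fix all constants and signs. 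Once the three-factor matrix identity is established, taking determinants and using multiplicativity, together with \eqref{formula} to identify $\det C=\Res(f_n',f_n'^*)$, immediately yields the claimed formula for $\partial\phi/\partial a$.
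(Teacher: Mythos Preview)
Your outline has the right skeleton---chain rule, integration by parts, factorization, then recognizing a Sylvester determinant---but it misidentifies the key cancellation and therefore the shape of the factorization.

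First, your plan to integrate by parts so that the $\partial_1\Phi_k$ and $\partial_2\Phi_k$ terms ``reorganize into $\mathrm{CT}$ of $f_n'$ (resp.\ $f_n'^*$) times $\Phi_k$ times a shifted monomial'' cannot succeed as stated: you cannot eliminate all derivatives of $\Phi_k$. What actually happens---and this is the crux of the paper's argument---is that integration by parts on the term $zu'\Phi_k$ produces $-u f'\,\partial_\zeta\Phi_k - u(f^*)'\,\partial_{\bar\zeta}\Phi_k$, and the first of these cancels \emph{exactly} against the original $zu f'\,\partial_\zeta\Phi_k$ term. Only $\partial_{\bar\zeta}\Phi_k$ survives, and one obtains the clean identity
\[
d\phi_k(zh)=\mathrm{CT}_z\bigl[(f'h^*+f'^*h)\,\partial_{\bar\zeta}\Phi_k(f,f^*)\bigr].
\]
This is the step your proposal is missing. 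Your reading of the third term as $(j+1)v_{k,j}$ and the hope that the others ``combine back'' with it goes in the wrong direction; in fact those $(j+1)$-type terms cancel away entirely.

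Second, the factorization is \emph{two}-fold, not three-fold: the matrix $(\partial\phi_k/\partial a_j)$ equals $V\cdot U$, where $V$ collects the Laurent coefficients of $\partial_{\bar\zeta}\Phi_k(f,f^*)$ and $U$ is a fixed $(2n+1)\times(2n+1)$ matrix, independent of the $\Phi_k$, built from the coefficients $b_m=(m+1)a_m$ of $f'$ and their conjugates. There is no separate diagonal factor $D$.

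Third, the factor $2a_0^{2n+1}$ does not come from a diagonal matrix, nor directly from ``$a_0$ real.'' It emerges from a single column operation on $U$: the linear combination $b_0U_0+\sum_{i\ge1}(b_{-i}U_{-i}-b_iU_i)=2b_0 Z$ replaces the middle column by a vector with only $n+1$ nonzero entries. After that, expanding along the last row reduces $\det U$ to $2b_0$ times a $2n\times 2n$ Sylvester determinant, which by \eqref{formula} equals $a_0^{2n}\Res(f',f'^*)$. Your intuition that the doubled middle column reflects $a_0$ feeding both $f$ and $f^*$ is correct at the level of why $U$ has a $2b_0$ on its diagonal, but that alone does not produce the power $a_0^{2n+1}$; you still need the column reduction and the Sylvester identification.

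In short: keep the chain-rule/IBP framework, but aim the IBP so that the $\partial_\zeta\Phi_k$ contributions cancel, leaving only $\partial_{\bar\zeta}\Phi_k$; then factor as $V\cdot U$ and evaluate $\det U$ by the column operation just described.
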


In the two cases discussed above,
the corresponding matrices $(v_{kj})$ are easily
found to be upper diagonal with $a_0$ to  certain degrees
on the diagonal. This yields the following corollary.

\begin{corollary}
In the introduced above notation, we have for the Jacobians:
$$
\frac{\partial (\mu_{-n},\ldots,\mu_0,\ldots,\mu_n)}{\partial (a_{-n},\ldots,a_0,\ldots,a_n)}=2a_0^{2n+1}\Res(f_n',f_n'^*),
$$
and
$$
\frac{\partial (\bar \mu_{n},\ldots,\bar \mu_1,\mu_0,\ldots,\mu_n)}{\partial (a_{-n},\ldots,a_0,\ldots,a_n)}=2a_0^{n^2+3n+1}\Res(f_n',f_n'^*),
$$
In particular, the transition Jacobian between these two maps is given by
\begin{equation*}\label{sled}
\frac{\partial (\bar \mu_{n},\ldots,\bar \mu_1,\mu_0,\ldots,\mu_n)}
{\partial (\mu_{-n},\ldots,\mu_{-1},\mu_0,\ldots,\mu_n)}=a_0^{n^2+n}.
\end{equation*}
\end{corollary}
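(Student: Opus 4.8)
The plan is to derive the Corollary by specializing Theorem~\ref{th:main} to the two systems $\Phi_k$ recorded for the maps (\ref{mm1}) and (\ref{mm2}). Since the common factor $2a_0^{2n+1}\Res(f_n',f_n'^*)$ is already supplied by the theorem, the whole task reduces to evaluating the determinant $\det(v_{kj})$ of the residue matrix (\ref{givven}) in each case; the transition Jacobian will then follow by division, both maps being functions of the same variables $(a_{-n},\dots,a_n)$.

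First I would treat the complete moment map (\ref{mm1}), where $\Phi_k(\zeta,\bar\zeta)=\zeta^k\bar\zeta$. Here the power of $z$ entering (\ref{givven}) is $f_n^k$ (the $\bar\zeta$-derivative $\partial_{\bar\zeta}\Phi_k=\zeta^k$ evaluated at $(f_n,f_n^*)$), so $v_{kj}=[z^j]f_n^k$. From the normalized expansion $f_n^k=a_0^k z^k(1+O(z))$ one sees that $v_{kj}=0$ for $j<k$ and $v_{kk}=a_0^k$, so $(v_{kj})$ is upper triangular and
\begin{equation*}
\det(v_{kj})=\prod_{k=-n}^{n}a_0^{k}=a_0^{\sum_{k=-n}^{n}k}=a_0^{0}=1 ,
\end{equation*}
which by Theorem~\ref{th:main} gives $\partial\mu/\partial a=2a_0^{2n+1}\Res(f_n',f_n'^*)$.

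Next I would treat the map (\ref{mm2}), whose rows split according to the sign of $k$. For $k\ge 0$ one again finds $v_{kj}=[z^j]f_n^k$, whence $v_{kj}=0$ for $j<k$ and $v_{kk}=a_0^{k}$; for $k<0$ the relevant power is $(f_n^*)^{-k}=a_0^{-k}z^{k}(1+O(z^{-1}))$, whence $v_{kj}=0$ for $j>k$ and $v_{kk}=a_0^{-k}$. Thus the nonnegative rows are supported on the columns $j\ge 0$ and the negative rows on the columns $j<0$, so $(v_{kj})$ is block diagonal with two triangular blocks and
\begin{equation*}
\det(v_{kj})=\prod_{k=0}^{n}a_0^{k}\cdot\prod_{k=1}^{n}a_0^{k}=a_0^{\,n(n+1)}=a_0^{\,n^2+n},
\end{equation*}
so that $\partial\mu^*/\partial a=2a_0^{2n+1}a_0^{n^2+n}\Res(f_n',f_n'^*)=2a_0^{\,n^2+3n+1}\Res(f_n',f_n'^*)$.

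Finally I would obtain the transition Jacobian by the chain rule, writing it as the quotient of the two formulas just found; the factor $2a_0^{2n+1}\Res(f_n',f_n'^*)$ cancels and leaves $a_0^{(n^2+3n+1)-(2n+1)}=a_0^{n^2+n}$. The step needing the most care is the triangular (respectively block-triangular) support of $(v_{kj})$, that is, pinning down the exact range of powers of $z$ produced in each row together with the identification of the diagonal coefficient as the stated power of $a_0$; I would also record that the division yielding the transition Jacobian is legitimate, since $\Res(f_n',f_n'^*)$ is not identically zero under the standing assumption that $f_n(z)/z$ has no zeros in the closed unit disk.
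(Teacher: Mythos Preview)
Your proposal is correct and follows exactly the route the paper indicates: apply Theorem~\ref{th:main} and observe that in each case the residue matrix $(v_{kj})$ is triangular (block-triangular in the second case) with the stated powers of $a_0$ on the diagonal, then divide for the transition Jacobian. One small remark: you evaluate $v_{kj}$ as the $z^j$-coefficient of $\partial_{\bar\zeta}\Phi_k(f_n,f_n^*)$, which is the expression that actually appears in the factorization inside the proof of Theorem~\ref{th:main}; this is the right thing to do, though the displayed formula~(\ref{givven}) in the statement is written with $\Phi_k$ itself.
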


\begin{proof}[Proof of Theorem~\ref{th:main}]

Let $u(z)$ represent a direction for variation of $f_n(z)$ and let $d\phi_k(u)$ denote the directional derivative of $\phi_k$ taken at  $f_n$ along the function $u$. Then by virtue (\ref{phii}) we have
\begin{equation}\label{sss1}
\begin{split}
d\phi_k(u)&=\lim_{t\to 0}\frac{\phi_k(f+tu)-\phi_k(f)}{t}\\
&=\mathrm{CT}_{z}[zuf'\partial_\zeta \Phi_k(f,f^*)+ zu^*f'\partial_{\bar\zeta} \Phi_k(f,f^*)+ zu'\Phi_k(f,f^*)].
\end{split}
\end{equation}
Integrating by parts we find for the last term in (\ref{sss1}):
\begin{equation*}
\begin{split}
\mathrm{CT}_{z}[zu'\Phi_k(f,f^*)]&
=\frac{1}{2\pi \I}\int_{\mathbb{T}}\Phi_k(f,f^*)\; du\\
&=-\frac{1}{2\pi \I}\int_{\mathbb{T}}(uf'\partial_{\zeta} \Phi_k(f,f^*)+u{f^*}'\partial_{\bar\zeta} \Phi_k(f,f^*))dz\\
&=\mathrm{CT}_{z}[-zuf'\partial_{\zeta} \Phi_k(f,f^*) +\frac{1}{z}u{f'}^*\partial_{\bar\zeta} \Phi_k(f,f^*)  \biggr],
\end{split}
\end{equation*}
where we used
$$
(f^*(z))'=-\frac{1}{z^2}f'^*(z).
$$
Substitution of this into (\ref{sss1}) gives
\begin{equation*}\label{sss2}
\begin{split}
d\phi_k(u)&=\mathrm{CT}_{z}[(zf'u^*+\frac{1}{z}f'^*u)\cdot \partial_{\bar\zeta} \Phi_k(f,f^*)],
\end{split}
\end{equation*}
and setting $h(z)=\frac{u(z)}{z}=h_0+h_1z+\ldots+h_nz^n$
we arrive at the following  formula:
\begin{equation}\label{sss3}
\begin{split}
d\phi_k(zh)&=\mathrm{CT}_{z}[(f'h^*+f'^*h)\cdot \partial_{\bar\zeta} \Phi_k(f,f^*)]\equiv
\sum_{j=-n}^n \phi_{kj}h_j,
\end{split}
\end{equation}
where $h_{-k}=\bar h_k$, $k\geq 1$. In this notation the required
Jacobian spells out as
\begin{equation}\label{found}
\frac{\partial \phi}{\partial a}=\det (\phi_{kj})_{-n\leq k,j\leq n}.
\end{equation}
Using (\ref{givven}) we find from  (\ref{sss3})
$$
\sum_{j=-n}^n \phi_{kj}h_j=\sum_{i,j=-n}^n v_{ki}u_{ij}h_j
$$
where
\begin{equation}\label{need}
\mathrm{CT}_{z}[(f' h^*+f'^*h)z^i]=\sum_{j=-n}^n u_{ij}h_j.
\end{equation}
This yields by virtue of (\ref{found})
\begin{equation*}\label{found1}
\frac{\partial \phi}{\partial a}=\det (v_{ki}) \cdot \det (u_{ij}).
\end{equation*}
Hence  we only need  to find the determinant of $U=(u_{jm})$. Let us write
$$
f'(z)=\sum_{k=0}^nb_kz^k, \qquad f'^*(z)=\sum_{k=0}^nb_{-k}z^{-k},
$$
where $b_k=(k+1)a_k$ and $b_{-k}=(k+1)\bar a_k$  for $k\geq 0$ (notice that this index notation is consistent with $b_0=a_0\in \mathbb{R}$). Then an explicit form of the matrix $U$ is easily found from (\ref{need}):
$$
U=\begin{pmatrix}
b_0 &  &  & & b_{-n} & &   &  \\
b_1 & b_0 &  &  & b_{1-n} &  b_{-n} &    &  \\
\vdots & \vdots & \ddots &  & \vdots & \vdots & \ddots  &  & \\
b_{n-1} & b_{n-2} & \ldots & b_{0} & b_{-1} & b_{-2} &  \ldots & q_{n-1} &   \\
b_{n} & b_{n-1} & \ldots & b_{1} & 2b_0 & b_{-1} &  \ldots & b_{1-n} & b_{-n}  \\
& b_{n} & \ldots & b_2 & b_1 & b_0  &  \ldots & b_{1-n} & b_{1-n} \\
& & \ddots & \vdots &  \vdots & & \ddots  &  \vdots & \vdots  \\
& & & b_{n} &b_{n-1} &    & & b_{0} & b_{-1}  \\
& & &  &b_{n} &  &   & & b_0  \\
\end{pmatrix}.
$$

Denote by $U_{k}$ the $k$th column $(u_{ik})_{i=-n}^n$ in $U$. Then
\begin{equation}\label{subss2}
b_0U_0+\sum_{i=1}^{n}b_{-i}U_{-i}-\sum_{i=1}^{n}b_{i}U_i=2b_0Z,
\end{equation}
where the column vector  $Z$ has the form
$$
Z=(b_{-n},\ldots,b_{-1},b_0,0,\ldots,0)^\top
$$
with the last $n$ entries equal to zero. It follows then from (\ref{subss2})  that
\begin{equation*}
\begin{split}
\det U&=2\det
\begin{pmatrix}
b_0 &  &  & & b_{-n} & &   &  \\
b_1 & b_0 &  &  & b_{1-n} &  b_{-n} &    &  \\
\vdots & \vdots & \ddots &  &\vdots  & \vdots & \ddots  &  & \\
b_{n-1} & b_{n-2} & \ldots & b_{0} & b_{-1} & b_{-2} &  \ldots & b_{-n} &   \\
b_{n} & b_{n-1} & \ldots & b_{1} & b_0 & b_{-1} &  \ldots & b_{1-n} & b_{-n}  \\
& b_{n} & \ldots & b_2 & & b_0  &  \ldots & b_{2-n} & b_{1-n} \\
& & \ddots & \vdots &  & & \ddots  &  \vdots & \vdots  \\
& & & b_{n} & &    & & b_{0} & b_{-1}  \\
& & &  & &  &   & & b_0  \\
\end{pmatrix}\\
\end{split}
\end{equation*}
Now expanding the latter determinant by the last row and taking into account (\ref{formula}), we get
$$
\det U=2b_0^{2n+1} \Res(\sum_{k=0}^n b_kz^{k},\sum_{k=0}^n b_{-k}z^{-k})
=2a_0^{2n+1}\Res(f',f'^*),
$$
which finishes the proof.
\end{proof}

%%%%%%%%%%%%%%%%%%%%%%%%%%%%%%%%%%%%%%%%%%%%%%%%%%%%%%%%%%%

%%%%%%%%%%%%%%%%%%%%%%%%%%%%%%%%%%%%%%%%%%%%%%%%%%%%%%%%%%%

\end{document}